\documentclass[a4paper,12pt, reqno]{amsart}


\usepackage{amsmath, amsthm, amscd, amsfonts, amssymb, graphicx, color}
\usepackage[bookmarksnumbered, colorlinks, plainpages]{hyperref}

\usepackage{enumitem}
\usepackage{url}
\usepackage{multirow}
\usepackage{graphicx}
\usepackage{subfigure}
\usepackage[pass]{geometry}
\usepackage{cite}
\usepackage{cancel}
\usepackage{color}

\usepackage{lscape}

\usepackage{tikz}
\usetikzlibrary{calc,shapes,decorations.pathreplacing,matrix}
\usetikzlibrary{patterns}

\tikzset{square matrix/.style={
    matrix of nodes,
    column sep=-\pgflinewidth, row sep=-\pgflinewidth,
    nodes={draw,
      minimum height=4.5pt,
      anchor=center,
      text width=4.5pt,
      align=center,
      inner sep=0pt
    },
  },
  square matrix/.default=1.2cm
}

\newtheorem{Theorem}{Theorem}[section]
\newtheorem{Definition}[Theorem]{Definition}

\newtheorem{Lemma}[Theorem]{Lemma}

\newtheorem{Remark}[Theorem]{Remark}
\newtheorem{Example}[Theorem]{Example}

\begin{document}

\title{Domination for latin square graphs}

\author{Behnaz Pahlavsay}
\address{Behnaz Pahlavsay, Department of Mathematics, Hokkaido University, Kita 10, Nishi 8, Kita-Ku, Sapporo 060-0810, Japan.}
\email{pahlavsayb@gmail.com}
\author{Elisa Palezzato}
\address{Elisa Palezzato, Department of Mathematics, Hokkaido University, Kita 10, Nishi 8, Kita-Ku, Sapporo 060-0810, Japan.}
\email{palezzato@math.sci.hokudai.ac.jp}
\author{Michele Torielli}
\address{Michele Torielli, Department of Mathematics, GI-CoRE GSB, Hokkaido University, Kita 10, Nishi 8, Kita-Ku, Sapporo 060-0810, Japan.}
\email{torielli@math.sci.hokudai.ac.jp}

\date{\today}

\begin{abstract}
In combinatorics, a \emph{latin square} is a $n\times n$ matrix filled with $n$ different symbols, each occurring exactly once in each row and exactly once in each column. Associated to each latin square, we can define a simple graph called a \emph{latin square graph}. In this article, we compute lower and upper bounds for the domination number and the $k$-tuple total domination numbers of such graphs. Moreover, we describe a formula for the $2$-tuple total domination number.
%
%
\end{abstract}
\maketitle

\section{Introduction}

Domination is well-studied in graph theory and the literature on this subject has been surveyed and detailed in the two books by Haynes, Hedetniemi, and Slater~\cite{HHS5, HHS6}. Throughout this paper, we use standard notation for graphs, see for example \cite{bondy2008graph}.

\begin{Definition} Let $G=(V_G,E_G)$ be a simple graph. A set $S \subseteq V_G$ is called a \emph{dominating set} if every vertex $v \in V_G\setminus S$ has at least one neighbour in $S$, i.e., $|N_G(v)\cap S| \geq 1$, where $N_G(v)$ is the open neighbourhood of $v$.  The \emph{domination number}, which we denote by $\gamma(G)$, is the minimum cardinality of a dominating set of $G$.
\end{Definition}

Similarly to \cite{KPPT2008}, the notion of domination has a central role in this paper. Among its many variations, we are also interested in $k$-tuple total domination, which was introduced by Henning and Kazemi \cite{HK8} as a generalization of \cite{HH3}, and also recently studied in \cite{PPT2018}.   

\begin{Definition} Let $G=(V_G,E_G)$ be a simple graph and $k \geq 1$. A set $S \subseteq V_G$ is called a $k$-\emph{tuple total dominating set} ($k$TDS) if every vertex $v \in V_G$ has at least $k$ neighbours in $S$, i.e., $|N_G(v)\cap S| \geq k$.  The \emph{$k$-tuple total domination number}, which we denote by $\gamma_{\times k,t}(G)$, is the minimum cardinality of a $k$TDS of $G$.  
We use min-$k$TDS to refer to  $k$TDSs of minimum size.
\end{Definition}

Since, by definition, every $k$TDS is a dominating set, we have that for all $k\ge1$
\begin{equation}\label{eq:gammasmallgammatot}
\gamma(G)\le\gamma_{\times k,t}(G).
\end{equation}

An immediate necessary condition for a graph to have a $k$-tuple total dominating set is that every vertex must have at least $k$ neighbours.  For example, for $k \geq 1$, a $k$-regular graph $G=(V_G,E_G)$ has only one $k$-tuple total dominating set, namely $V_G$ itself. Moreover, notice that a $k$-tuple total dominating set has at least $k+1$ elements.

\section{Latin squares}
Latin squares have been around for several centuries. In recent years, together with their associated graph, they have been intensively studied because of their connections with other area of mathematics and their practical applications. We refer to \cite{CombinDes} for an introduction on latin squares.

\begin{Definition}
A \emph{latin square of order $n\ge1$} is a $n\times n$ matrix containing $n$ symbols such that each row and each column contains exactly one copy of each symbol.
\end{Definition} 

\begin{Definition}
In a latin square $L$ of order $n$, if, for some $1\le l \le n$, the $l^2$ cells defined by $l$ rows and $l$ columns form a latin square of order $l$, 
it is called a \emph{latin subsquare} of $L$. 
\end{Definition}
Unless otherwise specified, in this paper we use $[n] = \{1,2,\dots ,n\}$ as the symbol set and also to index the rows and columns of a latin square.

It is clear that, if we permute in any way the rows, or the columns, or the symbols of a latin square, the result is still a latin square. 

Let $L$ be a latin square of order $n$ with cells $\{(r,c)~|~r,c\in[n]\}$, then each cell contains a symbol from an alphabet of size $n$, and no row or column of $L$ contains a repeated
symbol. Hence, given a cell $(r , c)$ containing the symbol $s = L_{r ,c}$, we can represent it by the triple $(r , c , s)$.

\begin{Definition} For a latin square $L$, we define 
$$E(L)=\{(r , c , s)~|~r,c,s\in[n]\text{ and }s = L_{r ,c}\}$$
to be the set of \emph{entries} of $L$.
\end{Definition}

\begin{figure}[htp]
\centering
\begin{minipage}{0.4\textwidth}
\centering
\begin{tikzpicture}
\matrix[square matrix,nodes={draw,
      minimum height=11pt,
      anchor=center,
      text width=11pt,
      align=center,
      inner sep=0pt
    },]{
 1 & 2 & 3 \\
 2 & 3 & 1 \\
 3 & 1 & 2 \\
};
\end{tikzpicture}
\end{minipage}
\begin{minipage}{0.4\textwidth}
\centering
\begin{tikzpicture}
\matrix[nodes={draw, thick, fill=black!80, circle},row sep=0.8cm,column sep=0.8cm] {
  \node(11){}; &
  \node(12){}; &
  \node(13){};\\
  \node(21){}; &
  \node(22){}; &
  \node(23){};\\
  \node(31){}; &
  \node(32){}; &
  \node(33){}; \\
};
\draw (11) to (12) to (13) ; \draw (11) to[bend left] (13); 
\draw (21) to (22) to (23) ; \draw (21) to[bend right] (23); 
\draw (31) to (32) to (33) ; \draw (31) to[bend right] (33); 
\draw (11) to (21) to (31) to[bend left] (11);  \draw (12) to (22) to (32) to[bend right] (12);  \draw (13) to (23) to (33) to[bend right] (13);  
\draw (11) to (23) to (32) to (11) ;
\draw (12) to (21) to (33) to (12) ;
\draw (13) to (22) to (31) to[bend left] (13) ;
\end{tikzpicture}
\end{minipage}
\caption{A latin square of order $3$ and its associated latin square graph.}\label{fig:latinsqu3x3}
\end{figure}

\begin{Definition}
Two latin squares $L$ and $L'$ (using the same symbol set) are \emph{isotopic} if there is a triple $(\sigma,\tau,\delta)$, where $\sigma$ is a row permutation, $\tau$ a column permutation, and $\delta$ a 
symbol permutation, carrying $L$ to $L'$. This means that if $(r , c , s)$ is an entry of $L$, then $(\sigma(r), \tau(c), \delta(s))$ is an entry of $L'$. The triple $(\sigma,\tau,\delta)$ is called an \emph{isotopy}.
\end{Definition}

In the theory of latin squares, the notion of partial transversal plays an important role. 

\begin{Definition}
Let $L$ be a latin square of order $n$.
A \emph{partial transversal} is a subset of $E(L)$ such that no two entries share the same row, column or symbol. 
We say that a partial transversal is \emph{maximal} if it is not properly contained in any other partial transversal. 
A \emph{transversal} is a partial transversal of cardinality $n$, i.e. it is a set of entries which includes exactly one entry from each row, column and symbol.
\end{Definition}

%
\begin{Example} 
If we consider $L$ the latin square of Figure~\ref{fig:latinsqu3x3}, we can easily construct a transversal by considering the elements on the main diagonal, i.e. $\{(1,1,1),(2,2,3),(3,3,2)\}$.
\end{Example}
%
%

The idea of constructing a simple graph from a latin square was introduced by Bose in \cite{bose1963} as example of strongly regular graphs. See \cite[Section 10.4]{GR01} for further discussion.
\begin{Definition} The \emph{latin square graph} of a latin square $L$ is the simple graph $\Gamma(L)$ with vertex set $E(L)$
and an edge between two distinct entries whenever they share a row, a column or a symbol. 
Accordingly, each edge of $\Gamma(L)$ is called, respectively, a row edge, a column edge or a symbol edge.
\end{Definition} 

\begin{Remark} Latin square graphs are invariant under isotopy, i.e. if two latin squares are isotopic, then their associated graphs are isomorphic.
\end{Remark}


It is trivial that $\Gamma(L)$ is the complete graph on $n^2$ vertices if and only if $n = 1,2$. 
A latin square graph $\Gamma(L)$ is a $3(n-1)$-regular graph, and any two different vertices $(r, c, s)$ and $(r, c', s')$ have $n$ neighbors in common, 
i.e. $n-2$ vertices in the row $r$ and the two vertices in the columns $c$ and $c'$ with symbols $s'$ and $s$, respectively. 
Similarly, any two different vertices $(r, c, s)$ and $(r', c, s')$ have $n$ neighbors in common. 
Moreover, it can be easily seen that any two distinct vertices $(r, c, s)$ and $(r', c', s)$ have also $n$ neighbours in common.

As noted in \cite{BMSW}, any maximal partial transversal of $L$ corresponds to a dominating set in $\Gamma(L)$. Notice that, however, not all dominating sets correspond to a maximal partial transversal. For example, if we consider $L$ the latin square of Figure~\ref{fig:latinsqu3x3}, we can easily construct a dominating set of $\Gamma(L)$ that is not a partial transversal by considering the set $\{(1,1,1),(2,1,2),(3,1,3)\}$.

\section{$k$-tuple total dominating set}

In \cite{KP} and \cite{EMW19}, the authors investigated the relationship between domination in latin square graphs and transversal in latin squares, motivating the study of various types of domination for such graphs. 

Since every latin square graph $\Gamma(L)$ is a $3(n-1)$-regular graph, when studying min-$k$TDS we should consider $k\le 3(n-1)$. This fact together with the definition of $\gamma_{\times k,t}(G)$ give us the following. 

\begin{Lemma} 
Let $L$ be a latin square of order $n$. Then $$\gamma_{\times 3(n-1),t}(\Gamma(L))=n^2.$$
\end{Lemma}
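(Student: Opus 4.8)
The plan is to invoke the structural facts stated just above the lemma. The key observation is that a latin square graph $\Gamma(L)$ of order $n$ is $3(n-1)$-regular, and the excerpt already records that a $k$-regular graph has exactly one $k$-tuple total dominating set, namely its entire vertex set. So my first step would be to set $k=3(n-1)$ and note that $\Gamma(L)$ is $k$-regular with $n^2$ vertices.

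Next, I would argue directly: to have $|N_{\Gamma(L)}(v)\cap S|\ge 3(n-1)$ for every vertex $v$, since $|N_{\Gamma(L)}(v)|=3(n-1)$, we are forced to have $N_{\Gamma(L)}(v)\subseteq S$ for every $v$. As soon as $\Gamma(L)$ has no isolated vertices — which holds since $n\ge 1$ and the graph is $3(n-1)$-regular, meaning for $n\ge 2$ every vertex has a neighbour, and for $n=1$ the statement $\gamma_{\times 0,t}=1=n^2$ is immediate — the union of the neighbourhoods $N_{\Gamma(L)}(v)$ over all $v$ is all of $V_{\Gamma(L)}$. Hence $S=V_{\Gamma(L)}$, so $\gamma_{\times 3(n-1),t}(\Gamma(L))=|V_{\Gamma(L)}|=n^2$.

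I would then remark that the $n=1,2$ cases (where $\Gamma(L)$ is complete) are consistent: for $n=2$ the graph $K_4$ is $3$-regular and its unique $3$TDS is $V$ of size $4=n^2$; for $n=1$ one has $k=0$ and the empty condition forces nothing, but the convention $\gamma_{\times 0,t}=n^2$ should be checked against the paper's definitions, or one may simply restrict to $n\ge 2$ as the intended range when $k=3(n-1)\ge 1$.

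There is essentially no obstacle here: the lemma is a direct specialization of the general fact about $k$-regular graphs quoted in the introduction, combined with the regularity of $\Gamma(L)$. The only thing to be careful about is the degenerate small-order cases and making sure the indexing $k=3(n-1)$ is within the admissible range $k\le 3(n-1)$, which it is with equality.
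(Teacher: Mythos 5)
Your proposal is correct and matches the paper's (implicit) argument: the paper gives no explicit proof, relying exactly on the two facts you cite — that $\Gamma(L)$ is $3(n-1)$-regular on $n^2$ vertices and that a $k$-regular graph's only $k$TDS is its full vertex set. Your extra care about the degenerate $n=1$ case (where $k=0$) is a reasonable caveat the paper glosses over, but otherwise there is nothing to add.
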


If we consider a latin square $L$ of small order, we can easily compute $\gamma_{\times k,t}(\Gamma(L))$.
\begin{Lemma}\label{lemma:n2case}
Let $L$ be a latin square of order $2$. Then $\gamma_{\times 1,t}(\Gamma(L))=2$ and $\gamma_{\times 2,t}(\Gamma(L))=3$.
\end{Lemma}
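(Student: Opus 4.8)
The plan is to reduce the statement to a computation on the complete graph $K_4$. As recalled in the excerpt, $\Gamma(L)$ is the complete graph on $n^2$ vertices precisely when $n\in\{1,2\}$; hence for $n=2$ we have $\Gamma(L)\cong K_4$, and it suffices to compute $\gamma_{\times 1,t}(K_4)$ and $\gamma_{\times 2,t}(K_4)$.

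For the lower bounds I would invoke the general fact recorded right after the definition of a $k$TDS, namely that any $k$-tuple total dominating set has at least $k+1$ elements. Applying this with $k=1$ and $k=2$ gives $\gamma_{\times 1,t}(\Gamma(L))\ge 2$ and $\gamma_{\times 2,t}(\Gamma(L))\ge 3$. (Alternatively one argues directly: a single vertex is not a $1$TDS of $K_4$, since no vertex is adjacent to itself; and a two-element set $\{u,w\}$ is not a $2$TDS, since inside that set $u$ has only the neighbour $w$, so $|N(u)\cap\{u,w\}|=1<2$.)

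For the matching upper bounds I would exhibit explicit dominating sets and use that every vertex of $K_4$ is adjacent to every other vertex. Pick any two vertices $u,w$: then $u$ has the neighbour $w$ in $\{u,w\}$, $w$ has the neighbour $u$, and each of the two remaining vertices has both $u$ and $w$ as neighbours; so $\{u,w\}$ is a $1$TDS and $\gamma_{\times 1,t}(\Gamma(L))\le 2$. Similarly, any three vertices $\{u,w,x\}$ form a $2$TDS: each of $u,w,x$ sees the other two, and the fourth vertex sees all three; hence $\gamma_{\times 2,t}(\Gamma(L))\le 3$. Combining with the lower bounds gives the claimed equalities $\gamma_{\times 1,t}(\Gamma(L))=2$ and $\gamma_{\times 2,t}(\Gamma(L))=3$.

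As for difficulty, there is essentially no obstacle here: the statement is a finite check once the base reduction $\Gamma(L)\cong K_4$ is in place. The only point requiring a little care is the convention that a vertex is never its own neighbour, which is exactly the reason $\gamma_{\times 1,t}(K_4)=2$ and not $1$.
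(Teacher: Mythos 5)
Your argument is correct and matches the paper's proof: both use the general lower bound that a $k$TDS has at least $k+1$ elements, and both obtain the upper bounds by observing that any $2$ (resp.\ $3$) vertices form a $1$TDS (resp.\ $2$TDS), which is immediate since $\Gamma(L)\cong K_4$. Your write-up merely makes the $K_4$ identification and the verification explicit where the paper leaves them as a one-line remark.
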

\begin{proof} This is a direct consequence of the fact that every $k$-tuple total dominating set has at least $k+1$ elements and the fact that any choice of two vertices in 
$\Gamma(L)$ gives a $1$TDS and any choice of $3$ vertices in $\Gamma(L)$ gives a $2$TDS.
\end{proof}

\begin{Lemma}\label{lemma:n3case} 
Let $L$ be a latin square of order $3$. Then $\gamma_{\times 1,t}(\Gamma(L))=2$ and $\gamma_{\times 2,t}(\Gamma(L))=3$.
\end{Lemma}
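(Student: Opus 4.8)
The plan is to treat the two assertions separately, in each case pairing the universal lower bound ``every $k$TDS has at least $k+1$ elements'' (recorded in the excerpt) with an explicit small tuple total dominating set, and then arguing minimality is forced.

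For $\gamma_{\times 1,t}(\Gamma(L))$ the lower bound $\ge 2$ is immediate. For the upper bound I would first record the non-adjacency structure of $\Gamma(L)$ when $n=3$: two distinct cells fail to be adjacent exactly when they agree in \emph{none} of their three coordinates, since agreement in two coordinates forces the cells to coincide and agreement in exactly two is impossible by the latin property. Consequently $\Gamma(L)$ is $6$-regular on $9$ vertices and every vertex $p$ has precisely two non-neighbours $q_1,q_2$; a short computation (using only that each row, column and symbol class has size $3$) shows $q_1$ and $q_2$ also disagree in all coordinates, so $\{p,q_1,q_2\}$ is a transversal of $L$ and a triangle of the ``non-adjacency graph''. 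Hence that graph is a disjoint union of three such triangles $T_1,T_2,T_3$ partitioning the $9$ vertices. Now pick $u\in T_1$ and $v\in T_2$: they are adjacent in $\Gamma(L)$, the non-neighbours of $u$ lie in $T_1$ and those of $v$ lie in $T_2$, and these sets are disjoint, so every other vertex (lying in $T_1$, $T_2$ or $T_3$) is adjacent to $u$ or to $v$. Thus $\{u,v\}$ is a $1$TDS, giving $\gamma_{\times 1,t}(\Gamma(L))=2$. (Alternatively, one may simply verify this on the cyclic latin square of Figure~\ref{fig:latinsqu3x3} and invoke isotopy invariance together with the fact that every latin square of order $3$ is isotopic to it.)

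For $\gamma_{\times 2,t}(\Gamma(L))$ the lower bound $\ge 3$ is again immediate. For the upper bound I would take $R$ to be the set of the three cells of a fixed row, say row $1$, and check it is a $2$TDS. Each cell of row $1$ is joined to the other two cells of $R$ by row edges, hence has two neighbours in $R$. A cell $(r,c,s)$ with $r\ne 1$ is joined to $(1,c,L_{1,c})\in R$ by a column edge and to the unique cell of row $1$ carrying the symbol $s$ by a symbol edge; these two cells of $R$ are distinct because $L_{1,c}\ne s$, as otherwise column $c$ would contain the symbol $s$ twice. So every vertex has at least two neighbours in $R$, whence $\gamma_{\times 2,t}(\Gamma(L))\le 3$ and equality follows.

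There is no genuine obstacle here; the only step demanding a little care is the structural description of the non-adjacency graph (equivalently, showing that the two non-neighbours of a vertex can be simultaneously dominated by a single one of its neighbours), which is what makes the size-$2$ total dominating set possible. If one wishes to avoid this small structural lemma, the verification can instead be carried out once on the explicit latin square of Figure~\ref{fig:latinsqu3x3}, relying on the isotopy invariance of latin square graphs.
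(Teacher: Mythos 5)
Your proposal is correct and follows essentially the same strategy as the paper: exhibit an explicit $k$TDS of size $k+1$ and invoke the trivial lower bound that every $k$TDS has at least $k+1$ elements. The only differences are cosmetic --- for $k=1$ the paper takes two cells of a common row and simply asserts domination, whereas you justify an arbitrary adjacent pair via the decomposition of the non-adjacency graph into three transversal triangles, and for $k=2$ the paper uses a symbol class where you use a row (its conjugate under the row/column/symbol symmetry) --- so your verifications are more detailed but the argument is the same.
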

\begin{proof} We first study $\gamma_{\times 1,t}(\Gamma(L))$. Assume $S=\{(1,1,s_1),(1,2,s_2)\}$. Then $S$ is a $1$TDS, in fact every vertex $v$ of $\Gamma(L)$ has at least one neighbour in $S$. Hence $\gamma_{\times 1,t}(\Gamma(L))\le |S|=2$. Since every $1$-tuple total dominating set has at least $2$ elements, we have that $\gamma_{\times 1,t}(\Gamma(L))=2$.

We now study $\gamma_{\times 2,t}(\Gamma(L))$. Assume $S=\{(r,c,1)~|~1\le r,c\le 3\}$. Then $S$ is a $2$TDS, in fact every vertex $v$ of $\Gamma(L)$ has at least $2$ neighbours in $S$. Hence $\gamma_{\times 2,t}(\Gamma(L))\le|S|=3$. Since every $2$-tuple total dominating set has at least $3$ elements, we have that $\gamma_{\times 2,t}(\Gamma(L))=3$.

\end{proof}

We can now describe a general upper bound for $\gamma_{\times k,t}(\Gamma(L))$.
\begin{Theorem}\label{theo:ktdsupperbound}
Let $L$ be a latin square of order $n\ge4$.
Then 
$$
\gamma_{\times k,t}(\Gamma(L))\le
\begin{cases}
  n-1 & \text{ if $k=1$} \\
  an  & \text{ if $k=2a$ and $1\le a\le n$}  \\
  an+n-a & \text{ if $k=2a+1$ and $1\le a\le n-2$.} 
  \end{cases}
$$
\end{Theorem}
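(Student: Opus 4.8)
The plan is to write down, in each case, an explicit $k$TDS of the stated size built from the ``lines'' of $L$. For $i\in[n]$ let $\mathcal R_i=\{(i,c,L_{i,c})\mid c\in[n]\}$ be the set of cells of row $i$, and for a symbol $s$ let $\mathcal D_s=\{(r,c,s)\mid L_{r,c}=s\}$ be the set of cells carrying symbol $s$, which by the latin property contains one cell in each row and one in each column. The first thing I would record is an elementary counting fact: for a single line $\mathcal R_i$, a vertex lying in row $i$ has $n-1$ neighbours in $\mathcal R_i$ (all the other cells of that row, via row edges), while a vertex $v=(r',c',s')$ with $r'\ne i$ has \emph{exactly two} neighbours in $\mathcal R_i$, namely the cell of row $i$ in column $c'$ and the cell of row $i$ carrying symbol $s'$; these are distinct and genuinely adjacent to $v$ precisely because the latin property forbids $L_{i,c'}=s'$ when $r'\ne i$. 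The identical statement holds for $\mathcal D_s$ in place of $\mathcal R_i$, by the symmetry of the three roles row/column/symbol.

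For $k=2a$ with $1\le a\le n$ I would take $S=\mathcal R_1\cup\dots\cup\mathcal R_a$; the rows being pairwise disjoint, $|S|=an$. By the counting fact (and since the relevant neighbour sets lie in distinct rows, hence are disjoint), a vertex inside these rows has $(n-1)+2(a-1)=n+2a-3\ge 2a$ neighbours in $S$, while a vertex outside all of them has exactly $2a$; hence $S$ is a $2a$TDS and $\gamma_{\times 2a,t}(\Gamma(L))\le an$.

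For $k=2a+1$ with $1\le a\le n-2$ I would take $S=\mathcal R_1\cup\dots\cup\mathcal R_a\cup T$, where $T=\{(r,c,1)\mid L_{r,c}=1,\ r>a\}$ consists of the $n-a$ symbol-$1$ cells outside the first $a$ rows; since $T$ is disjoint from the rows, $|S|=an+(n-a)=an+n-a$. A vertex in rows $1,\dots,a$ already has $n+2a-3\ge 2a+1$ neighbours inside the rows — this is exactly where $n\ge4$ enters, in the subcase $a=1$; for $a\ge2$ it follows from $a\le n-2$. For a vertex $v=(r',c',s')$ with $r'>a$, the $2a$ neighbours it has inside the rows are supplemented by at least one neighbour in $T$: if $s'=1$ then $v\in T$ and, as no two symbol-$1$ cells share a row or a column, $v$ is symbol-adjacent to each of the other $n-a-1\ge1$ cells of $T$; if $s'\ne1$ then the symbol-$1$ cell of row $r'$ lies in $T$ and is adjacent to $v$ (same row, different column, different symbol). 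So $|N_{\Gamma(L)}(v)\cap S|\ge 2a+1$ in all cases, and $S$ is a $(2a+1)$TDS. Finally, for $k=1$ one takes any $n-1$ cells of a single row: every vertex of that row has a neighbour among them since $n-1\ge2$, and every vertex outside the row has two neighbours in the full row, at most one of which was discarded, so $S$ is a $1$TDS of size $n-1$.

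The part I would be most careful about is the counting fact for a single line: one must check that the ``column cell'' and the ``symbol cell'' are distinct and that each really agrees with $v$ in exactly one coordinate rather than two, and both points rest entirely on the defining no-repeat property of a latin square. After that the argument is just bookkeeping with the inequalities $n\ge4$ and $1\le a\le n$ (resp. $1\le a\le n-2$), which are tight in the boundary cases ($n=4$ with $k=3$, and $k=2a+1$ with $a=n-2$); so the constructions above are the efficient ones and there is no slack to lose.
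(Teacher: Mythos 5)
Your proof is correct and is essentially the paper's argument up to a conjugacy of the latin square: where the paper takes the $a$ symbol classes $1,\dots,a$ (plus, in the odd case, the rest of the first row), you take the first $a$ rows (plus the rest of the symbol-$1$ class), and since rows, columns and symbols play symmetric roles in $\Gamma(L)$ the two constructions and their counts coincide. Your explicit ``each vertex off a line has exactly two neighbours on it'' lemma, with the check that the column-cell and symbol-cell are distinct, is a slightly cleaner packaging of the same verification, including the $k=1$ case.
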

\begin{proof}
Consider $k=1$ and $S=\{(1,1,s_1),(1,2,s_2),\dots,(1,n-1,s_{n-1})\}$, i.e. $S$ consists of the $n-1$ vertices of $\Gamma(L)$ corresponding to the first $n-1$ entries in the first row of $L$. Now if $v\in S$, then it has $n-2$
neighbours in $S$. If $v=(1,n,s_n)$, then it has $n-1$ neighbours in $S$. If $v=(r,c,s)$ with $2\le r \le n$ and $1\le c\le n-1$, then $v$ has at least $1$ (possibly two)  neighbour in $S$, i.e. $(1,c,s_c)$.
Finally, if $v=(r,n,s)$ with $2\le r\le n$, then $v$ has exactly $1$ neighbour in $S$, i.e. $(1,c_i,s_i)$ with $s_i=s$. This shows that $S$ is a $1$TDS. Hence $\gamma_{\times 1,t}(\Gamma(L))\le |S|=n-1$.

Consider $k=2a$ and $S=\{(r,c,s)~|~1\le r,c\le n \textit{ and } 1\le s \le a\}$, i.e. $S$ consists of all the vertices of $\Gamma(L)$ corresponding to entries of $L$ where the symbols $1,2,\dots, a$ appear. 
Notice that in each row or column of $L$, $S$ has exactly $a$ vertices. This shows that if $v$ is not in $S$, then it has exactly $2a$ neighbours in $S$. On the other hand, each $v\in S$ has $n-1+2(a-1)=n+2a-3\ge2a$
neighbour in $S$. This implies that $S$ is a $2a$TDS. Hence $\gamma_{\times 2a,t}(\Gamma(L))\le |S|=an$.

Consider $k=2a+1$ and $S=\{(r,c,s)~|~1\le r,c\le n \textit{ and } 1\le s \le a\}\cup\{(1,c,s)~|~s\ge a+1\}$, i.e. $S$ consists of all the vertices of $\Gamma(L)$ corresponding to entries of $L$ where the symbols $1,2,\dots, a$ appear together
with the remaining $n-a$ entries of the first row of $L$.  Consider $v=(r,c,s)$ a vertex of $\Gamma(L)$ not in $S$. Then there are $a$ elements of $S$ in the same row of $v$, $a$ elements of $S$ in the same column of $v$ 
and $1$ element $(1,c',s)$ in $S$ with $c'\ne c$. Hence, $v$ has exactly $2a+1$ neighbours in $S$. If $v=(r,c,s)\in S$ with $1\le s \le a$, then it has at least $2(a-1)+n-1=n+2a-3\ge2a+1$ neighbours in $S$.
If $v=(1,c,s)\in S$ with $s\ge a+1$, then it has exactly $n-1+a\ge 2a+1$. This implies that $S$ is a $(2a+1)$TDS. Hence $\gamma_{\times 2a+1,t}(\Gamma(L))\le |S|=an+n-a$.
\end{proof}


Using the previous results, we can now compute $\gamma_{\times 2,t}(\Gamma(L))$ for any latin square $L$.
\begin{Theorem} Let $L$ be a latin square of order $n\ge3$. Then $$\gamma_{\times 2,t}(\Gamma(L))=n.$$
\end{Theorem}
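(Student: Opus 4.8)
The plan is to prove the equality by establishing the two inequalities separately. For the upper bound $\gamma_{\times 2,t}(\Gamma(L)) \le n$, when $n = 3$ this is exactly the content of Lemma \ref{lemma:n3case}, and when $n \ge 4$ it is the case $k = 2$, $a = 1$ of Theorem \ref{theo:ktdsupperbound}; concretely, the set $S = \{(r,c,s) \mid s = 1\}$ of all cells carrying the symbol $1$ is a $2$TDS of size $n$. So the real work is entirely in the lower bound $\gamma_{\times 2,t}(\Gamma(L)) \ge n$.

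For the lower bound, I would argue by contradiction: suppose $S$ is a $2$TDS of $\Gamma(L)$ with $|S| \le n - 1$. For each $c \in [n]$ write $C_c$ for the number of elements of $S$ lying in column $c$, and for each $s \in [n]$ write $T_s$ for the number of elements of $S$ carrying symbol $s$; then $\sum_c C_c = \sum_s T_s = |S|$. Since the $n$ rows of $L$ partition the cells and $|S| \le n - 1$, some row $r_0$ contains no element of $S$. Now fix any column $c$ and consider the cell $v = (r_0, c, L_{r_0,c})$, which is not in $S$; being a vertex it must have at least two neighbours in $S$, and since row $r_0$ meets $S$ in nothing, every such neighbour lies in column $c$ or carries symbol $L_{r_0,c}$. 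Hence $C_c + T_{L_{r_0,c}} \ge 2$ for every $c \in [n]$.

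Summing this inequality over $c \in [n]$ and using that $c \mapsto L_{r_0,c}$ is a bijection of $[n]$, so that $\sum_c T_{L_{r_0,c}} = \sum_s T_s$, one gets $|S| + |S| \ge 2n$, i.e. $|S| \ge n$, contradicting $|S| \le n - 1$. This forces $\gamma_{\times 2,t}(\Gamma(L)) \ge n$, and together with the upper bound yields the claimed equality.

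The main point to get right — and the reason a one-line argument does not suffice — is that the crude degree bound ``each vertex of $S$ dominates $3(n-1)$ vertices, so $|S|\cdot 3(n-1) \ge 2n^2$'' only yields $|S| \ge 2n^2/(3(n-1))$, roughly $2n/3$, which is far short of $n$. The trick is to restrict attention to a row disjoint from $S$ (which exists precisely because we assume $|S| < n$): inside such a row the two-neighbour requirement involves only column- and symbol-edges, and summing over that row converts these local constraints into the two global counts $\sum_c C_c$ and $\sum_s T_s$, each equal to $|S|$. A minor check is that running the same argument on columns or on symbol classes produces nothing new, and that for $n = 3$ the upper bound must be quoted from Lemma \ref{lemma:n3case} rather than from Theorem \ref{theo:ktdsupperbound}, whose statement is restricted to $n \ge 4$.
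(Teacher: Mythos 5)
Your upper bound is exactly the paper's (Lemma~\ref{lemma:n3case} for $n=3$, the case $k=2$, $a=1$ of Theorem~\ref{theo:ktdsupperbound} for $n\ge4$), but your lower bound argument is genuinely different from, and cleaner than, the one in the paper. The paper also starts from a $2$TDS $S$ with $|S|=n-1$, but then runs an escalation argument: it normalizes $S$ to avoid the last row, last column and the symbol $n$, and proves a claim that if $S$ avoids $k$ rows then it must avoid at least $k+1$ columns (because the $k$ cells with symbol $n$ in those rows can only be dominated column-wise, forcing $k$ columns to each absorb two elements of $S$), and symmetrically; iterating gives an impossible unbounded growth. Your argument replaces all of this with a single double count: pick one row $r_0$ disjoint from $S$ (which exists since $|S|\le n-1$), observe that for each $c$ the cell $(r_0,c,L_{r_0,c})$ can only be dominated by the $C_c$ elements of $S$ in column $c$ and the $T_{L_{r_0,c}}$ elements of $S$ with its symbol --- and these two sets are disjoint since the only cell of column $c$ bearing that symbol is $(r_0,c,L_{r_0,c})$ itself, which is not in $S$ --- so $C_c+T_{L_{r_0,c}}\ge2$; summing over $c$ and using that $c\mapsto L_{r_0,c}$ is a bijection gives $2|S|\ge2n$ directly. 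This is correct, shorter, and sidesteps the bookkeeping in the paper's claim (in particular the separate check that none of the symbol-$n$ cells sits in the avoided column). The paper's iterative argument, on the other hand, extracts slightly more structural information about how a hypothetical small $2$TDS would have to sit inside $L$, but for the purpose of this theorem your summation is the more economical route.
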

\begin{proof} If $n=3$, by Lemma~\ref{lemma:n3case}, $\gamma_{\times 2,t}(\Gamma(L))=3=n$. 

Assume $n\ge4$. By Theorem~\ref{theo:ktdsupperbound}, $\gamma_{\times 2,t}(\Gamma(L))\le n$. Suppose, seeking a contradiction, that $\gamma_{\times 2,t}(\Gamma(L))\le n-1$ and hence, there exists $S$ a $2$TDS with $|S|=n-1$.
Without loss of generalities, we can assume that 
$S$ has no vertices corresponding to entries from the last row or column of $L$ 
or corresponding to entries with the symbol $n$. 

We claim that if $S$ has no vertices corresponding to entries from $k$ rows (respectively $k$ columns) of $L$, for some $k\ge1$, then $S$ has no vertices corresponding to entries from at least $k+1$ columns (respectively $k+1$ rows) of $L$. To prove the claim we assume that $S$ has no vertices corresponding to entries from $k$ rows of $L$. Since $L$ is a latin square of order $n$, in each one of these $k$ rows there is one entry with symbol $n$, and all these $k$ entries are in different columns. If any of these $k$ entries with symbol $n$ is in the last column of $L$, then the corresponding vertex would have no neighbour in $S$, and hence, $S$ would not be a $2$TDS. We can then assume that none of these $k$ entries is in the last column of $L$. 
Furthermore, because $S$ is a $2$TDS, each of the vertices corresponding to these $k$ entries with symbol $n$ has at least $2$ neighbours in $S$. This implies that in $L$ there are at least $k$ columns with at least $2$ entries corresponding to vertices in $S$.
Since $|S|=n-1$, this implies that in $L$ there are at least $k+1$ columns (one is the last one) such that none of the entries in these columns correspond to vertices in $S$. The same type of argument works for the case that $S$ has no vertices corresponding to entries from $k$ columns of $L$. This proves the claim.

Using the claim, we have that $S$ has no vertices corresponding to entries from $k$ rows (and $k$ columns) of $L$ for all $k\ge1$, but this is impossible.
\end{proof}


We can also describe a lower bound for $\gamma_{\times 1,t}(\Gamma(L))$.
\begin{Theorem}\label{theo:gamma1lowerbound4n7}
Let $L$ be a latin square of order $n\ge 2$. Then
$$\gamma_{\times 1,t}(\Gamma(L)) \geq \frac{4n -2}{7}.$$
\end{Theorem}
\begin{proof}
If $n=2,3$ the result follows from Lemma~\ref{lemma:n2case} and Lemma~\ref{lemma:n3case}.
Assume $n\geq 4$, and consider $S$ a 1TDS with $|S|=\gamma_{\times 1,t}(\Gamma(L))$.
For all $i=1,\dots,n$, define $\mathfrak{r}_i=|\{(i,c,s)\in S ~|~ 1\leq c,s \leq n\}|$, $\mathfrak{c}_i=|\{(r,i,s)\in S ~|~ 1\leq r,s \leq n\}|$ and $\mathfrak{s}_i=|\{(r,c,i)\in S ~|~ 1\leq r,c \leq n\}|$.
Moreover, define 
$$\mathfrak{r}=\sum_{\substack{i=1 \\ \mathfrak{r}_i\neq 0}}^n (\mathfrak{r}_i-1),~
\mathfrak{c}=\sum_{\substack{i=1 \\ \mathfrak{c}_i\neq 0}}^n (\mathfrak{c}_i-1) \text{ and }
\mathfrak{s}=\sum_{\substack{i=1 \\ \mathfrak{s}_i\neq 0}}^n (\mathfrak{s}_i-1).$$

Let $G_S$ be the graph with vertex set $S$ and edges as follows. Two vertices in the same row $(r,c_1,s_1), (r,c_2,s_2)\in S$, with $c_1<c_2$, are adjacent if and only if there does not exist $(r,c_3,s_3)\in S$ with $c_1<c_3<c_2$. Two vertices in the same column $(r_1,c,s_1), (r_2,c,s_2)\in S$, with $r_1<r_2$, are adjacent if and only if there does not exists $(r_3,c,s_3)\in S$ with $r_1<r_3<r_2$.  Two vertices with the same symbol $(r_1,c_1,s), (r_2,c_2,s)\in S$, with $r_1<r_2$, are adjacent if and only if there does not exist $(r_3,c_3,s)\in S$ with $r_1<r_3<r_2$. By construction, $|E(G_S)|=\mathfrak{r + c + s}$.
Since $S$ is a 1TDS, every vertex of $S$ has at least one neighbour in $S$, and hence, every such vertex corresponds to an entry in $L$ that shares a common row or column or symbol with at least one other entry corresponding to a vertex in $S$. This implies that $G_S$ has no isolated vertex and hence
$$|E(G_S)|=\mathfrak{r + c + s}\geq \frac{|S|}{2} =\frac{\gamma_{\times 1,t}(\Gamma(L))}{2}.$$
%
%
%
By Theorem~\ref{theo:ktdsupperbound},  $|S|=\gamma_{\times 1,t}(\Gamma(L)) \leq  n-1$.
Hence, $L$ has at least one row and one column whose cells do not correspond to any entry of $S$.

Let $c_0$ denote the rightmost column of $L$ whose entries do not correspond to any element of $S$ and, similarly, $r_0$ the bottommost row of $L$ whose entries do not correspond to any element of $S$.
In $c_0$, there are $n-\gamma_{\times 1,t}(\Gamma(L))+\mathfrak{r}$ entries that do not share a common row or column with entries corresponding to vertices in $S$. Likewise, in $r_0$ there are $n-\gamma_{\times 1,t}(\Gamma(L))+\mathfrak{c}$ entries that do not share a common row or column with entries corresponding to vertices in $S$. Note there is one entry which is counted twice, i.e. the entry $(r_0,c_0,s)$ shared by the column $c_0$ and the row $r_0$.
This implies that the total number of entries in $c_0$ and $r_0$ that do not share a common row or column with entries corresponding to vertices in $S$ can be expressed as
$$(n-\gamma_{\times 1,t}(\Gamma(L))+\mathfrak{r})+(n-\gamma_{\times 1,t}(\Gamma(L))+\mathfrak{c})-1.$$
Since $S$ is a 1TDS all these vertices corresponding to entries in $c_0$ and $r_0$ are dominated by elements of $S$ that share the same symbol.
Note that any element of $S$ share the same symbol with at most two vertices corresponding to entries in all $c_0$ and $r_0$.
It follows that
$$(n-\gamma_{\times 1,t}(\Gamma(L))+\mathfrak{r})+(n-\gamma_{\times 1,t}(\Gamma(L))+\mathfrak{c})-1\leq 2(\gamma_{\times 1,t}(\Gamma(L))-\mathfrak{s})$$
and hence that
$$2n -1 + (\mathfrak{r} + \mathfrak{c} + 2\mathfrak{s}) \leq  4\gamma_{\times 1,t}(\Gamma(L)).$$
However, since $\mathfrak{r + c + s}\geq \frac{\gamma_{\times 1,t}(\Gamma(L))}{2}$ and $\mathfrak{s}\geq0$, we have
$$(2n -1)+ \frac{\gamma_{\times 1,t}(\Gamma(L))}{2} \leq 4\gamma_{\times 1,t}(\Gamma(L))$$
or equivalently
$$\frac{4n-2}{7} \leq \gamma_{\times 1,t}(\Gamma(L)).$$
\end{proof}

\begin{figure}[htp!]
\centering
\begin{tikzpicture}
\matrix[square matrix,nodes={draw,
      minimum height=11pt,
      anchor=center,
      text width=11pt,
      align=center,
      inner sep=0pt
    },]{
 1 & |[draw, circle]|2 & 3 & |[draw, circle]| 4 & 5 & 6 & 7 & 8 & 9 &10\\
 2 & 3 & 4 & 5 & |[draw, circle]| 1 & 7 & 8 & 9 & 10 & 6 \\
 3 & 4 & |[draw, circle]| 5 & 1 & 2 & 8 & 9 & 10 & 6 & 7 \\
 4 & 5 & 1 & 2 & |[draw, circle]| 3 & 9 & 10 & 6 & 7 & 8 \\
 |[draw, circle]| 5 & 1 & 2 & 3 & 4 & 10 & 6 & 7 & 8 & 9 \\
 6 & 7 & 8 & 9 & 10 & 1 & 2 & 3 & 4 & 5 \\  
 7 & 8 & 9 & 10 & 6 & 2 & 3 & 4 & 5 &1 \\
 8 & 9 & 10 & 6 & 7 & 3 & 4 & 5 & 1 & 2 \\
 9 & 10 & 6 & 7 & 8 & 4 & 5 & 1 & 2 & 3 \\
 10 & 6 & 7 & 8 & 9 & 5 & 1 & 2 & 3 & 4 \\
};
\end{tikzpicture}
\caption{A latin square of order $10$.}\label{fig:latinsqu10}
\end{figure}

\begin{Example} 
Consider $L$ the latin square of Figure~\ref{fig:latinsqu10}. If we consider $S=\{(1,2,2),(1,4,4),(2,5,1),(3,3,5),(4,5,3),(5,1,5)\}$, then we have that $|S|=6$ and it is a $1$TDS. On the other hand, by Theorem~\ref{theo:gamma1lowerbound4n7}, $\gamma_{\times 1,t}(\Gamma(L))\ge6$. This implies that $\gamma_{\times 1,t}(\Gamma(L))=6$.
\end{Example}

\section{$q$-step latin squares}

There exist several known classes of latin squares. Between them we recall the definition of the so called $q$-step latin squares. 

\begin{Definition}
A latin square $L$ of order $mq$ is said to be of \emph{$q$-step type} if it can be represented by a matrix of $q\times q$ blocks $A_{ij}$ as follows

\begin{center}
$$L=\begin{pmatrix}
A_{11} & A_{12} & \cdots & A_{1m} \\
A_{21} & A_{22} & \cdots & A_{2m} \\
\vdots   &  \vdots  & \cdots  &  \vdots\\
A_{m1} & A_{m2} & \cdots & A_{mm} \\
\end{pmatrix}$$
\end{center}
where each block $A_{ij}$ is a latin subsquare of order $q$ and two blocks $A_{ij}$ and $A_{i'j'}$ contain
the same symbols if and only if $i+j\equiv i'+j'$ (mod $m$).
\end{Definition}

\begin{Remark}\label{1-step}
Every cyclic latin square is a latin square of $1$-step type.
\end{Remark}

\begin{figure}[htp!]
\centering
\begin{tikzpicture}
\matrix[square matrix,nodes={draw,
      minimum height=11pt,
      anchor=center,
      text width=11pt,
      align=center,
      inner sep=0pt
    },]{
 1 & 2 & 3 & 4 & 5 & 6\\
 2 & 1 & 4 & 3 & 6 & 5 \\
 3 & 4 & 5 & 6 & 1& 2 \\
 4 & 3 & 6 & 5 & 2 & 1 \\
 5 & 6 & 1 & 2 & 3 & 4 \\
 6 & 5 & 2 & 1 & 4 & 3 \\  
};
\end{tikzpicture}
\caption{A $2$-step latin square of order $6$.}\label{fig:latinsqu3}
\end{figure}

By Theorem~\ref{theo:ktdsupperbound}, we know that $\gamma_{\times 1,t}(\Gamma(L)) \leq n-1$. However, in the case of $q$-step latin squares, we can describe a smaller upper bound.
\begin{Theorem}\label{theo:qstep1tdsupperbound}
Let $L$ be a $q$-step latin square of order $n=mq\ge3$. If $m\ge q+1$, then $\gamma_{\times 1,t}(\Gamma(L)) \leq n-q$.
If $m\le q$, then $\gamma_{\times 1,t}(\Gamma(L)) \leq n-m+1$.
\end{Theorem}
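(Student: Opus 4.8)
The plan is to exhibit, in each of the two regimes, an explicit $1$TDS of $\Gamma(L)$ of the stated size. I will use two standard facts. First, two distinct cells of $\Gamma(L)$ are adjacent precisely when they share a row, share a column, or share a symbol. Second, for a $q$-step latin square $L$ of order $n=mq$ the $n$ symbols split into $m$ classes of size $q$, and the block $A_{ij}$ (which occupies rows $(i-1)q+1,\dots,iq$, the \emph{$i$-th block-row}, and columns $(j-1)q+1,\dots,jq$, the \emph{$j$-th block-column}) is a latin subsquare whose $q$ symbols form the class determined by $i+j$ modulo $m$; in particular every symbol of that class occurs exactly once in each row and each column of $A_{ij}$.

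\emph{The set $S$.} For $i=2,\dots,m$ let $C_i$ be the set of the $q$ cells of $A_{i1}$ lying in one fixed column $\gamma_i$ of block-column $1$. The cells of $C_i$ all lie in column $\gamma_i$, have pairwise distinct rows, and, since $A_{i1}$ is a latin subsquare, realize all $q$ symbols of the class of $A_{i1}$. If $m\ge q+1$, choose the $\gamma_i$ so that $\{\gamma_i:2\le i\le m\}=\{1,\dots,q\}$ (possible as $m-1\ge q$) and set $S=\bigcup_{i=2}^m C_i$, so $|S|=(m-1)q=n-q$. If $2\le m\le q$, take $\gamma_i=i-1$ and set $S=\bigl(\bigcup_{i=2}^m C_i\bigr)\cup D$, where $D=\{(q+1,c,L_{q+1,c}):m\le c\le q\}$ consists of $q-m+1$ further cells of $A_{21}$, all in row $q+1$, disjoint from $\bigcup_i C_i$; then $|S|=(m-1)q+(q-m+1)=n-m+1$. (For $m=1$ the asserted bound is just $\gamma_{\times1,t}(\Gamma(L))\le n$, which holds since a full row of $L$ is a $1$TDS for $n\ge 2$.)

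\emph{$S$ dominates $\Gamma(L)$.} Let $R$, $\mathcal C$, $\mathcal S$ be the sets of rows, columns and symbols met by $S$. By construction $R$ contains every row of block-rows $2,\dots,m$; $\mathcal C=\{1,\dots,q\}$ is exactly block-column $1$; and $\mathcal S$ contains the full symbol class of $A_{i1}$ for each $i\in\{2,\dots,m\}$, hence — since $i+1$ runs over all residues modulo $m$ except $2$ — every symbol class other than the class $\Sigma^\ast$ of $A_{11}$. Now let $(r,c,s)$ be any cell of $L$. If $r$ lies in a block-row $\ge 2$ then $r\in R$. If $r$ lies in block-row $1$ and $c$ in block-column $1$ then $c\in\mathcal C$. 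If $r$ lies in block-row $1$ and $c$ in block-column $j\ge 2$, then $(r,c,s)$ is a cell of $A_{1j}$, whose class is determined by $1+j\not\equiv 2\pmod m$, hence is not $\Sigma^\ast$, so $s\in\mathcal S$. In every case $(r,c,s)$ has a neighbour in $S$.

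\emph{Total domination, and the main obstacle.} If $q\ge 2$, every cell of each $C_i$ shares its column $\gamma_i$ with the other $q-1\ge 1$ cells of $C_i$, and in the case $2\le m\le q$ every cell of $D$ shares row $q+1$ with the cell of $C_2$ in that row; if $q=1$ then $m=n\ge 3$ and $S$ consists of the $m-1\ge 2$ mutually adjacent cells in column $1$. So no vertex of $S$ is isolated in $\Gamma(L)[S]$, i.e. $S$ is a $1$TDS, and the two bounds follow. The delicate point is the domination check: since $S$ uses only block-column $1$ and block-rows $\ge 2$, the cells in block-row $1$ outside block-column $1$ are unreachable from $S$ via a common row or column and must be dominated through their symbols. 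What makes this work — and what dictates the shape of $S$ — is that the classes of $A_{21},\dots,A_{m1}$ coincide with the classes of $A_{12},\dots,A_{1m}$, both being all classes except that of $A_{11}$; this alignment is exactly the $q$-step hypothesis, and the rest of the argument is then bookkeeping on the sizes.
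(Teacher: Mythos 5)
Your proof is correct and takes essentially the same approach as the paper: both exhibit an explicit set of size $n-q$ (resp.\ $n-m+1$) that covers one full block-strip of rows, one full block-strip of columns, and every symbol class except the one whose blocks avoid the uncovered region. The only difference is cosmetic — the paper builds its set inside block-row $1$ (a staircase of rows through $A_{11},\dots,A_{1q}$ plus partial first rows), while you work in block-column $1$ using one full column from each of $A_{21},\dots,A_{m1}$; your version is the transposed mirror of the paper's and the verification is the same block-class bookkeeping, with the edge cases ($q=1$, $m=1$, total domination inside $S$) all handled correctly.
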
 
\begin{proof}
If $q=1$, this is a consequence of Theorem~\ref{theo:ktdsupperbound}. We can suppose $q\ge2$.

First assume that $m\ge q+1$ and consider $S_1=\{(r,c,s)~|~1\le r\le q, (r-1)q+1\le c\le rq, 1\le s\le n\}$ and $S_2=\{(1,c,s)~|~q^2+1\le c\le n-q, 1\le s\le n\}$. Then $S=S_1\cup S_2$. In other words if 
$$L=\begin{pmatrix}
A_{11} & A_{12} & \cdots & A_{1m} \\
A_{21} & A_{22} & \cdots & A_{2m} \\
\vdots   &  \vdots  & \cdots  &  \vdots\\
A_{m1} & A_{m2} & \cdots & A_{mm} \\
\end{pmatrix}$$
$S_1$ consists of the first row of the block $A_{11}$, the second row of the block $A_{12}$, and so on until the last row of the block $A_{1q}$. This implies that $|S_1|=q^2$. Similarly, $S_2$ is the first row of all the blocks $A_{1j}$ with $q+1\le j\le m-1$, and $|S_2|=(m-1)q-q^2$.
This implies that $|S|=(m-1)q=n-q$. By construction $S$ is a $1$TDS. In fact, if $v\in S$, then it has at least $q-1$ neighbours in $S$, if $v\notin S$ then it has at least one neighbour in $S$. Hence $\gamma_{\times 1,t}(\Gamma(L)) \leq n-q$.

Assume now that $m\le q$ and consider $S_1=\{(r,c,s)~|~1\le r\le m-1, (r-1)q+1\le c\le rq, 1\le s\le n\}$ and $S_2=\{(r,(m-1)q,s)~|~m\le r \le q, 1\le s\le n\}$. Then $S=S_1\cup S_2$. In other words, $S_1$ consists of the first row of the block $A_{11}$, the second row of the block $A_{12}$, and so on until the $(m-1)$-th row of the block $A_{1(m-1)}$. This implies that $|S_1|=(m-1)q$. Similarly, $S_2$ is the bottom part of the last column of the block $A_{1(m-1)}$, and $|S_2|=q-(m-1)$.
This implies that $|S|=mq-m+1=n-m+1$. By construction $S$ is a $1$TDS. In fact, if $v\in S_1$, then it has $q-1$ neighbours in $S$, if $v\in S_2$, then it has $q-(m-1)\ge1$ neighbours in $S$,  if $v\notin S$ then it has at least one neighbour in $S$. Hence $\gamma_{\times 1,t}(\Gamma(L)) \leq n-m+1$.
\end{proof}

If we use the technique described in the proof of Theorem~\ref{theo:qstep1tdsupperbound}, we can easily construct $1$TDS for $q$-step latin squares.

\begin{Example} 
Consider $L$ the $2$-step latin square of Figure~\ref{fig:latinsqu3}. In this case, $q=2$ and $m=3$. Consider $S=\{(1,1,1),(1,2,2),(2,3,4),(2,4,3)\}$, then we have that $|S|=6-2=4$ and it is a $1$TDS as described in the proof of Theorem~\ref{theo:qstep1tdsupperbound}. On the other hand, by Theorem~\ref{theo:gamma1lowerbound4n7}, $\gamma_{\times 1,t}(\Gamma(L))\ge4$. This implies that $\gamma_{\times 1,t}(\Gamma(L))=4$.
\end{Example}

\begin{Example}
Consider $L$ the $3$-step latin square of Figure~\ref{fig:latinsqu3step9}. In this case, $q=m=3$. If we consider $S=\{(1,1,1),(1,2,2),(1,3,3),(2,4,5),$ $(2,5,6),$ $(2,6,4),(3,6,5)\}$, we have that $|S|=9-3+1=7$ and it is a $1$TDS as described in the proof of Theorem~\ref{theo:qstep1tdsupperbound}.
\end{Example}

\begin{figure}[htp!]
\centering
\begin{tikzpicture}
\matrix[square matrix,nodes={draw,
      minimum height=11pt,
      anchor=center,
      text width=11pt,
      align=center,
      inner sep=0pt
    },]{
 |[draw, circle]|1 & |[draw, circle]|2 & |[draw, circle]|3 & 4 & 5 & 6 & 7 & 8 & 9 \\
 2 & 3 & 1 & |[draw, circle]|5 & |[draw, circle]|6 & |[draw, circle]|4 & 8 & 9 &7 \\
 3 & 1 & 2 & 6 & 4 & |[draw, circle]|5 & 9 & 7& 8 \\
 4 & 5 & 6 & 7 & 8 & 9 & 1 & 2 & 3 \\
 5 & 6 & 4 & 8 & 9 & 7 & 2 & 3 & 1 \\
 6 & 4 & 5 & 9 & 7 & 8 & 3 & 1 & 2 \\  
 7 & 8 & 9 & 1 & 2 & 3 & 4 & 5 & 6 \\
 8 & 9 & 7 & 2 & 3 & 1 & 5 & 6 & 4 \\
 9 & 7 & 8 & 3 & 1 & 2 & 6 & 4 & 5\\
};
\end{tikzpicture}
\caption{A $3$-step latin square of order $9$.}\label{fig:latinsqu3step9}
\end{figure}

%


\section{Dominating set}
Similarly to the case of $1$TDS and $2$TDS, if we consider latin squares of small order, we can easily compute their domination number.

\begin{Lemma}\label{lemma:dominationorder5} Let $L$ be a latin square of order $2\le n\le5$. Then 
$$\gamma(\Gamma(L)) = 
\begin{cases}
n-1 & \text{ if $2\le n\le4$ }\\
3 & \text{ if $n=5$.}
\end{cases}
$$
\end{Lemma}

Putting together \eqref{eq:gammasmallgammatot} and Theorem~\ref{theo:ktdsupperbound}, we have that $\gamma(\Gamma(L)) \leq n-1$. However, 
we can describe a smaller upper bound.
\begin{Theorem}\label{theo:dominationupperbound} Let $L$ be a latin square of order $n\ge5$. Then 
$$\gamma(\Gamma(L)) \le 
\begin{cases}
  n-2 & \text{ if $5\le n\le 21$} \\
  n-(\lceil \frac{\sqrt{2n+7}-1}{2} \rceil-1)  & \text{ if $n\ge 22$.}  
  \end{cases}
$$
\end{Theorem}
\begin{proof} Assume first that $5\le n\le 21$. By Lemma~\ref{lemma:dominationorder5}, we can assume that $6\le n\le 21$. 
Without loss of generality, we can assume that the bottom right $2\times2$ submatrix of $L$ is the square in Figure~\ref{fig:specialsquareorder2}, for some symbols $a,b\ne1$.
\begin{figure}[htp!]
\centering
\begin{tikzpicture}
\matrix[square matrix,nodes={draw,
      minimum height=11pt,
      anchor=center,
      text width=11pt,
      align=center,
      inner sep=0pt
    },]{
 a & 1 \\
 1 & b \\ 
};
\end{tikzpicture}
\caption{Square of order $2$.}\label{fig:specialsquareorder2}
\end{figure}

Assume that $a= b$ and let $L'$ be the $(n-2)\times(n-2)$ submatrix of $L$ obtained from $L$ by deleting the last two rows and the last two columns (notice that $L'$ is not a latin square in general). In $L'$ there are exactly $n-2$ entries with symbol $a$ and $n-2$ entries with symbol $1$. Let $v_1=(r_1,c_1,a)$ and $v_2=(r_2,c_2,1)$ be such entries in $L'$ with $r_1\ne r_2$ and $c_1\ne c_2$. For all $i\in\{3,\dots,n-2\}$, let $v_i=(r_i,c_i,s_i)$ be an entry in $L'$ with $r_i\ne r_j$, $c_i\ne c_j$ for all $j\in\{1,\dots,i-1\}$, and $s_i\in\{1,\dots,n\}$. Consider $S=\{v_1,v_2,v_3,\dots,v_{n-2}\}$. By construction $S$ dominates all the vertices of $\Gamma(L)$ that correspond to entries in the first $n-2$ rows of $L$ by row, to the entries of $L$ in the first $n-2$ columns by column and to the bottom right $2\times2$ submatrix of $L$ by symbol. This implies that $S$ is a dominating set for $\Gamma(L)$, and hence $\gamma(\Gamma(L)) \le |S|= n-2$.

Assume that $a\ne b$ and let $L'$ be the $(n-2)\times(n-2)$ submatrix of $L$ obtained from $L$ by deleting the last two rows and the last two columns (notice that $L'$ is not a latin square in general). In $L'$ there are exactly $n-3$ entries with symbol $a$. Let $v_1=(r_1,c_1,a)$ be one of such entries in $L'$. Since $n\ge6$, we can assume that $v_1$ is chosen in such way that if in $L'$ we have entries $(r_1,c',1)$ and $(r',c_1,1)$, then in $L$ we have at most one of the entries $(n-1,c',b)$ and $(r',n-1,b)$. In $L'$ there are at least $n-5$ entries with symbol $b$ that are not in the row $r_1$ or in the column $c_1$. Let $v_2=(r_2,c_2,b)$ be one such entry in $L'$ such that the entry $(r_1,c_2,1)$ or $(r_2,c_1,1)$ is an entry in $L'$. Notice that such entry $v_2$ always exists by the choice of $v_1$. By construction, in $L'$ there are at least $n-5$ entries with symbol $1$ that are not in the rows $r_1,r_2$ or in the columns $c_1,c_2$. Since $n\ge6$, we can consider $v_3=(r_3,c_3,1)$ be one of such entries in $L'$.
For all $i\in\{4,\dots,n-2\}$, let $v_i=(r_i,c_i,s_i)$ be an entry in $L'$ with $r_i\ne r_j$, $c_i\ne c_j$ for all $j\in\{1,\dots,i-1\}$, and $s_i\in\{1,\dots,n\}$. Consider $S=\{v_1,v_2,v_3,v_4,\dots,v_{n-2}\}$. By construction $S$ dominates all the vertices of $\Gamma(L)$ that correspond to entries in the first $n-2$ rows of $L$ by row, to the entries of $L$ in the first $n-2$ columns by column and to the bottom right $2\times2$ submatrix of $L$ by symbol. This implies that $S$ is a dominating set for $\Gamma(L)$, and hence $\gamma(\Gamma(L)) \le |S|= n-2$.

Assume now that $n\ge22$. The previous argument can be easily generalized. Consider $2\le k\le \lceil \frac{n}{2}\rceil$, let $L'$ be the $(n-k)\times(n-k)$ submatrix of $L$ obtained from $L$ by deleting the last $k$ rows and the last $k$ columns, and let $L''$ be the $k\times k$ submatrix of $L$ obtained from $L$ by deleting the first $n-k$ rows and the first $n-k$ columns (notice that $L'$ and $L''$ are not latin squares in general).  We construct $S$ a diagonal of $L'$ (i.e., $n-k$ entries in distinct rows and columns) containing all the symbols that appear in the entries of $L''$ as follows.  We choose entries in $L'$ which match the symbols that appear in the entries of $L''$ one by one. Consider $(r,c,s)$ an entry of $L''$. Then there are at most $2k-1$ entries in $L$ with symbol $s$ that are not in $L'$. On the other hand, in $L''$ appear at most $k^2$ symbols. This implies that, when processing symbol $s$ in $L''$, we can choose any entry of $L'$ with symbol $s$ other than the at most $2k-1$ copies of $s$ outside of $L'$, and the at most $ 2(k^2-1)$ copies of $s$ inside of $L'$ which occur in the same row or column as a previously chosen entry.  Thus, this works if $n > 2k-1 + 2(k^2-1)$, or equivalently $k < (\sqrt{2n+7}-1)/2$ which occurs when $k\le \lceil(\sqrt{2n+7}-1)/2\rceil-1$. Now that all the symbols appearing in $L''$ have been taken care, we can complete $S$ with enough entries of $L'$ belonging to rows and columns without a chosen entry. In this way, $S$ is a dominating set for $\Gamma(L)$ with $|S|=n-k$.
\end{proof}

\begin{Remark} Notice that the first part of the proof of Theorem~\ref{theo:dominationupperbound} works for any $n\ge5$. However, starting from $n=22$, where $n-2=20>19=n-(\lceil \frac{\sqrt{2n+7}-1}{2} \rceil-1)$, $n-(\lceil \frac{\sqrt{2n+7}-1}{2} \rceil-1)$ is a smaller upper bound than $n-2$.
\end{Remark}

In the case of $1$-step latin squares, we can describe an even smaller upper bound than the one of Theorem~\ref{theo:dominationupperbound}.

\begin{Theorem}\label{theo:dominsetupperbound1step}
Let $L$ be a $1$-step (i.e. cyclic) latin square of order $n=3f+g$ where $f\ge1$ and $0\leq g <3$. Then $\gamma(\Gamma(L))\leq 2f+g.$
\end{Theorem}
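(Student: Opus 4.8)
Since latin square graphs are invariant under isotopy and $L$ is of $1$-step type, I may assume $L$ is the cyclic square on $\mathbb{Z}_n$ given by $L_{r,c}=r+c$, so that rows, columns and symbols are all indexed by $\mathbb{Z}_n$ and a cell is a triple $(r,c,r+c)$. The elementary fact I will exploit is that in any latin square two \emph{distinct} cells cannot agree in two of the three coordinates (row, column, symbol): agreeing in row and column means being the same cell, and agreeing in row and symbol, or in column and symbol, forces the third coordinate to agree as well. Hence in $\Gamma(L)$ two distinct cells are adjacent exactly when they agree in at least one coordinate. Consequently, if $S$ is a set of cells and $A,B,D\subseteq\mathbb{Z}_n$ are the sets of rows, columns and symbols occurring in $S$, then $S$ is a dominating set of $\Gamma(L)$ if and only if $A^{c}+B^{c}\subseteq D$, where $A^{c}=\mathbb{Z}_n\setminus A$ and so on, and $+$ is the sumset in $\mathbb{Z}_n$.

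\textbf{Choice of the dominating set.} The plan is to pick $S$ of size exactly $2f+g=n-f$ with $A=B=T$ for a set $T$ whose complement $U$ is \emph{sum-free} (i.e.\ $(U+U)\cap U=\varnothing$), and whose symbol set $D$ contains $U+U$. Sum-freeness gives $U+U\subseteq T$, so this is consistent with $|D|\le|S|$ precisely because an $f$-element interval realises the smallest possible sumset, of size $2f-1\le 2f+g$. Concretely, take the ``middle third'' $U=\{f,f+1,\dots,2f-1\}$, which is sum-free in $\mathbb{Z}_n$ (the only case split is whether $U+U=\{2f,\dots,4f-2\}$ wraps modulo $n=3f+g$, i.e.\ whether $f\ge g+2$), and $T=\mathbb{Z}_n\setminus U=\{0,\dots,f-1\}\cup\{2f,\dots,n-1\}$ with $|T|=2f+g$. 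I then realise $S$ as the graph $\{(r,\phi(r)):r\in T\}$ of a bijection $\phi\colon T\to T$; this automatically makes $|S|=|T|$ and $A=B=T$, reducing the problem to arranging that the line sums $\{r+\phi(r):r\in T\}$ cover $U+U$.

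\textbf{The bijection.} Define $\phi$ by: $i\mapsto 2f+i$ for $i\in\{0,\dots,f-1\}$; the single $f$-cycle $2f+j\mapsto j+1$ for $0\le j\le f-2$ and $3f-1\mapsto 0$ on $\{2f,\dots,3f-1\}$; and $\phi(r)=r$ on the remaining $g$ elements $\{3f,\dots,n-1\}$. One checks this is a bijection of $T$. The unreduced line sums from the first block are $\{2f,2f+2,\dots,4f-2\}$ (the even integers in $[2f,4f-2]$) and those from the second block are $\{2f+1,2f+3,\dots,4f-3\}\cup\{3f-1\}$; together these exhaust every integer in $[2f,4f-2]$, and since that interval has fewer than $n$ elements, its reduction mod $n$ is exactly $U+U$. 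Hence $D\supseteq U+U=T^{c}+T^{c}$, so $S$ dominates and $\gamma(\Gamma(L))\le|S|=2f+g$.

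\textbf{Main difficulty.} The delicate point is exactly the third step: one must exhibit a single configuration of cells with distinct rows and distinct columns, of the minimal admissible size $n-f$, that simultaneously (i) avoids the $f$-element set $U$ in \emph{both} its rows and its columns, and (ii) still produces all $2f-1$ symbols of $U+U$. A greedy or Hall-type existence argument is awkward here, because after committing roughly $n-f$ rows and columns the ``long'' anti-diagonals of the array — the ones carrying the central symbols — are almost entirely blocked, so there is essentially no slack; the explicit, globally coordinated $\phi$ above is what resolves this. The remaining verifications (sum-freeness of $U$, that $\phi$ is a bijection of $T$, and that the line sums exhaust $[2f,4f-2]$) are routine, requiring only the case split on whether $f\ge g+2$; one also notes that for $f=1$ (so $n\le 5$) the same construction works and recovers the value already obtained for small orders.
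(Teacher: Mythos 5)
Your construction is correct and is essentially the paper's own: both avoid the middle third of the rows and columns and place $2f$ cells along (shifted) diagonals of the two remaining corner blocks so that their symbols cover everything the central block needs, then add $g$ diagonal cells for the leftover rows and columns. Your $\mathbb{Z}_n$/sumset reformulation ($A^{c}+B^{c}\subseteq D$, with $U$ the middle third and $U+U$ covered by the line sums of $\phi$) is just a cleaner, fully explicit way of verifying what the paper phrases as ``region V is diagonally dominated by the elements in regions I and IX.''
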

\begin{proof}
First assume that $g=0$. Split $L$ into $9$ regions of equal size like a sudoku. Label the bottom regions of $L$ by I-III from left to right, the middle regions by IV-VI, and the top regions by VII-IX. 
Construct $S$ by taking the vertices corresponding to the entries on
the first upper diagonal in region I together with the vertex corresponding to the entry in the bottom left corner of region I, and the vertices corresponding to entries along the main
diagonal of region IX. 
In this construction it can be seen that $|S|=2f$ and that there is exactly one element of $S$ in
each column and row of regions I and IX.
The set $S$ constructed in this way is a dominating set. To see this, one can simply note that the entries in region I-III and regions VII-IX are all dominated row-wise by the elements in regions I and IX respectively. Regions IV and VI are dominated column-wise by the elements in regions I and IX respectively. This leaves region V which is diagonally dominated by the elements in regions I and IX. This implies that $\gamma(\Gamma(L))\leq 2f$.

Assume now that $1\le g\le 2$. In this case, consider $L'$ the submatrix of $L$ obtained by deleting the last $g$ rows and $g$ columns. By construction $L'$ is a $3f\times 3f$ submatrix of $L$. Similarly to the case $g=0$, we can construct a dominating set $S'$ for $\Gamma(L')$ such that $|S'|=2f$. To obtain $S$ a dominating set for $\Gamma(L)$, it is enough to add to $S'$ the $g$ vertices corresponding to entries in the last $g$ rows on the main diagonal of $L$. $S$ is clearly a dominating set for $\Gamma(L)$ and $|S|=2f+g$. This implies that $\gamma(\Gamma(L))\leq 2f+g$.
\end{proof}

If we use the technique described in the proof of Theorem~\ref{theo:dominsetupperbound1step}, we can easily construct dominating set for cyclic latin squares.

\begin{figure}[htp!]
\centering
\begin{minipage}{0.4\textwidth}
\centering
\begin{tikzpicture}
\matrix[square matrix,nodes={draw,
      minimum height=11pt,
      anchor=center,
      text width=11pt,
      align=center,
      inner sep=0pt
    },]{
 1 & 2 & 3 & 4 & |[draw, circle]|5 & 6\\
 2 & 3 & 4 & 5 & 6 & |[draw, circle]|1 \\
 3 & 4 & 5 & 6 & 1& 2 \\
 4 & 5 & 6 & 1 & 2 & 3 \\
 5 &|[draw, circle]|6 & 1 & 2 & 3 & 4 \\
 |[draw, circle]|6 & 1 & 2 & 3 & 4 & 5 \\  
};
\end{tikzpicture}
\end{minipage}
\begin{minipage}{0.4\textwidth}
\centering
\begin{tikzpicture}
\matrix[square matrix,nodes={draw,
      minimum height=11pt,
      anchor=center,
      text width=11pt,
      align=center,
      inner sep=0pt
    },]{
 1 & 2 & 3 & 4 & 5 & 6 & |[draw, circle]|7 & 8 & 9 \\
 2 & 3 & 4 & 5 & 6 & 7 & 8 & |[draw, circle]|9 &1 \\
 3 & 4 & 5 & 6 & 7 & 8 & 9 & 1& |[draw, circle]|2 \\
 4 & 5 & 6 & 7 & 8 & 9 & 1 & 2 & 3 \\
 5 & 6 & 7 & 8 & 9 & 1 & 2 & 3 & 4 \\
 6 & 7 & 8 & 9 & 1 & 2 & 3 & 4 & 5 \\  
 7 & |[draw, circle]|8 & 9 & 1 & 2 & 3 & 4 & 5 & 6 \\
 8 & 9 & |[draw, circle]|1 & 2 & 3 & 4 & 5 & 6 & 7 \\
 |[draw, circle]|9 & 1 & 2 & 3 & 4 & 5 & 6 & 7 & 8\\
};
\end{tikzpicture}
\end{minipage}
\caption{A cyclic latin square of order $6$ and one of order $9$.}\label{fig:1steporder6}
\end{figure}

\begin{figure}[htp!]
\centering
\begin{tikzpicture}
\matrix[square matrix,nodes={draw,
      minimum height=11pt,
      anchor=center,
      text width=11pt,
      align=center,
      inner sep=0pt
    },]{
 1 & 2 & 3 & 4 & |[draw, circle]|5 & 6 &7\\
 2 & 3 & 4 & 5 & 6 & |[draw, circle]|7 & 1 \\
 3 & 4 & 5 & 6 & 7 & 1 & 2 \\
 4 & 5 & 6 & 7 & 1 & 2 & 3 \\
 5 & |[draw, circle]|6 & 7 & 1 & 2 & 3 & 4 \\
 |[draw, circle]|6 & 7 & 1 & 2 & 3 & 4 & 5 \\  
7 & 1 & 2 & 3 & 4 & 5 & |[draw, circle]|6  \\  
};
\end{tikzpicture}
\caption{A cyclic latin square of order $7$.}\label{fig:1steporder7}
\end{figure}

\begin{figure}[htp!]
\centering
\begin{tikzpicture}
\matrix[square matrix,nodes={draw,
      minimum height=11pt,
      anchor=center,
      text width=11pt,
      align=center,
      inner sep=0pt
    },]{
 1 & 2 & 3 & 4 & |[draw, circle]|5 & 6 &7 & 8\\
 2 & 3 & 4 & 5 & 6 & |[draw, circle]|7 & 8 & 1 \\
 3 & 4 & 5 & 6 & 7 & 8 & 1 & 2 \\
 4 & 5 & 6 & 7 & 8 & 1 & 2 & 3 \\
 5 & |[draw, circle]|6 & 7 & 8& 1 & 2 & 3 & 4 \\
 |[draw, circle]|6 & 7 & 8 & 1 & 2 & 3 & 4 & 5 \\  
7 & 8 & 1 & 2 & 3 & 4 & |[draw, circle]|5 & 6  \\  
8 & 1 & 2 & 3 & 4 & 5 & 6 & |[draw, circle]|7  \\  
};
\end{tikzpicture}
\caption{A cyclic latin square of order $8$.}\label{fig:1steporder8}
\end{figure}

\begin{Example}
Consider $L_1$ the cyclic latin square of order $6$ of Figure~\ref{fig:1steporder6}. If we consider $S_1=\{(1,5,5),(2,6,1),(5,2,6),(6,1,6)\}$, then $|S_1|=4$ and it is a dominating set for $\Gamma(L_1)$.

Consider $L_2$ the cyclic latin square of order $9$ of Figure~\ref{fig:1steporder6}. If we consider $S_2=\{(1,7,7),(2,8,9),(3,9,2),(7,2,8),(8,3,1),(9,1,9)\}$, then $|S_2|=6$ and it is a dominating set for $\Gamma(L_2)$.

Consider $L_3$ the cyclic latin square of Figure~\ref{fig:1steporder7}. If we consider $S_3=\{(1,5,5),(2,6,7),(5,2,6),(6,1,6),(7,7,6)\}$, then $|S_3|=5$ and it is a dominating set for $\Gamma(L_3)$.

Consider $L_4$ the cyclic latin square of Figure~\ref{fig:1steporder8}. If we consider $S_4=\{(1,5,5),(2,6,7),(5,2,6),(6,1,6),(7,7,5),(8,8,7)\}$, then $|S_4|=6$ and it is a dominating set for $\Gamma(L_4)$.

\end{Example}

We can now describe a lower bound on $\gamma(\Gamma(L))$.

\begin{Theorem}\label{theo:lowbanddominsetnumb} Let $L$ be a latin square of order $n\ge5$. Then $$\gamma(\Gamma(L))\ge\lceil{\frac{n}{2}}\rceil.$$
\end{Theorem}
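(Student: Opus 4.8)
The plan is to imitate Spencer's argument for the queens graph: starting from a minimum dominating set, I would use the upper bound of Theorem~\ref{theo:dominationupperbound} to isolate a large sub-rectangle of $L$ whose entries are confined to a small set of symbols, and then count cells in that rectangle to force the dominating set to be large. The structural feature of latin squares that makes this work is that each symbol occurs exactly once per row and per column.

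Concretely, fix a minimum dominating set $D$ of $\Gamma(L)$ and set $k=|D|=\gamma(\Gamma(L))$. Let $R$, $C$, $\Sigma$ be the sets of rows, columns, and symbols appearing among the triples of $D$, with $a=|R|$, $b=|C|$, $t=|\Sigma|$; clearly $a,b,t\le k$. By Theorem~\ref{theo:dominationupperbound} we have $k\le n-2$, hence $a,b\le n-2$, so the array $M$ of cells $(r,c)$ with $r\notin R$ and $c\notin C$ is a nonempty $(n-a)\times(n-b)$ subarray of $L$. The key observation I would establish is that every symbol occurring in $M$ lies in $\Sigma$: if $(r,c)\in M$ carries the symbol $s=L_{r,c}$, then the vertex $(r,c,s)$ sits in no row or column hit by $D$, so it is not in $D$ and cannot be dominated by a row-edge or a column-edge; since $D$ dominates it, some element of $D$ must be a symbol-neighbour of it, i.e.\ must carry symbol $s$, so $s\in\Sigma$.

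It then remains to count: the $(n-a)(n-b)$ cells of $M$ all carry symbols from $\Sigma$, and, since $L$ is a latin square, a fixed symbol occurs at most once in each row and each column of $M$, hence at most $\min(n-a,n-b)$ times in $M$. Therefore $(n-a)(n-b)\le t\,\min(n-a,n-b)\le k\,\min(n-a,n-b)$. Taking without loss of generality $a\ge b$, the factor $\min(n-a,n-b)=n-a$ is positive; cancelling it gives $n-b\le k$, and combining with $b\le k$ yields $n\le 2k$, so $\gamma(\Gamma(L))\ge n/2\ge(n-1)/2$. I do not expect a real obstacle here: the proof is short, and the only points needing care are the verification that cells of $M$ must be symbol-dominated and the (harmless) use of Theorem~\ref{theo:dominationupperbound} to guarantee $M\neq\emptyset$, which also accounts for the hypothesis $n\ge5$. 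In fact this computation yields the slightly stronger bound $\gamma(\Gamma(L))\ge n/2$.
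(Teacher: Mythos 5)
Your proof is correct, and it takes a genuinely different route from the paper's. The paper imitates Spencer's queens-graph argument quite literally: it locates the extreme rows and columns missed by the dominating set $S$, builds an explicit set $T$ of cells lying in the two extreme free columns and spanning a matching row-range, and bounds how many cells of $T$ a single element of $S$ can dominate (at most $4$ in general, at most $2$ when its row lies outside the range of $T$, since the free columns rule out column-edges); the resulting inequality simplifies to $\gamma(\Gamma(L))\ge\tfrac12(n-1)$. You instead work with the entire $(n-a)\times(n-b)$ subarray $M$ of cells missed both row-wise and column-wise, observe that every symbol occurring in $M$ must already occur among the symbols of $D$, and use the latin property to cap each symbol's multiplicity in $M$ by $\min(n-a,n-b)$. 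Your bookkeeping is sound: the cancellation is legitimate because $a,b\le\gamma(\Gamma(L))\le n-2$ forces $n-a,\,n-b\ge 2$, and the ``without loss of generality'' is just the row/column symmetry of $\Gamma(L)$ (or can be avoided entirely by writing $\max(n-a,n-b)\le t$, i.e.\ $n-\min(a,b)\le t$). Both arguments need Theorem~\ref{theo:dominationupperbound} only to guarantee that some rows and columns are missed. What your version buys is that it is shorter and actually sharper: you get $n\le \min(a,b)+t\le 2\gamma(\Gamma(L))$, hence $\gamma(\Gamma(L))\ge n/2$, which as an integer bound beats the paper's $\lceil(n-1)/2\rceil$ by $1$ whenever $n$ is odd. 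What the paper's version buys is that it is the argument that transfers verbatim from the queens graph, where symbols (diagonals) do not have the exact once-per-row-and-column regularity that your counting exploits.
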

\begin{proof} 
Assume there exists $S$ a dominating set of $\Gamma(L)$ such that $|S|<\frac{n}{2}$. This implies that there are more than $\frac{n}{2}$ rows and $\frac{n}{2}$ columns of $L$ whose entries do not correspond to vertices in $S$. Let $L'$ be the $p\times q$ submatrix of $L$ obtained by deleting the rows and columns that contain cells corresponding to the entries of $S$, where $p,q\ge n-|S|$ (notice that $L'$ is not a latin square in general). 
Since $|S|<\frac{n}{2}$, then there exist a symbol $s_0$ that does not appear in any entry in $S$. Moreover, since $n-|S|>\frac{n}{2}$, then every symbol must occur in $L'$. In particular, there exists one entry of the form $(r,c,s_0)\in E(L')$. However, $(r,c,s_0)$ has no neighbours in $S$, but this is an absurd. Hence, $\gamma(\Gamma(L))\ge\lceil\frac{n}{2}\rceil.$
%
%
\end{proof}

\begin{figure}[htp!]
\centering
\begin{tikzpicture}
\matrix[square matrix,nodes={draw,
      minimum height=11pt,
      anchor=center,
      text width=11pt,
      align=center,
      inner sep=0pt
    },]{
 1 & 2 & 3 & 4 & 5 & 6 & 7 \\
 2 & 3 & 1 & 5 & 6 & 7 & 4 \\
 3 & 1 & 2 & 6 & 7 & 4 & 5 \\
 4 & 5 & 6 & 7 & 1 & 2 & |[draw, circle]|3 \\
 5 & 6 & 7 & 1 & |[draw, circle]|4 & 3 & 4 \\
 6 & 7 & 4 & |[draw, circle]|2 & 3 & 5 & 1 \\
 7 & 4 & 5 & 3 & 2 & |[draw, circle]|1 & 6 \\  
};
\end{tikzpicture}
\caption{A latin square of order $7$ with $\gamma(\Gamma(L))=4$. }\label{fig:latinsqu7gamma4}
\end{figure}

\begin{Remark} By \cite[Theorem 12]{BMSW}, the lower bound described in Theorem~\ref{theo:lowbanddominsetnumb} is tight in all cases.
\end{Remark}

\begin{Example} Consider $L$ the latin square of Figure~\ref{fig:latinsqu7gamma4}. If we consider $S=\{(4,7,3),(5,5,4),(6,4,2),(7,6,1)\}$, then $|S|=4$ and it is a dominating set for $\Gamma(L)$. By Theorem~\ref{theo:lowbanddominsetnumb}, $\gamma(\Gamma(L))\ge\frac{7}{2}$. However, the domination number is an integer, and hence $\gamma(\Gamma(L))=4$.
\end{Example}

\begin{Theorem} Let $L$ be a $1$-step (i.e. cyclic) latin square of even order $n$. If $\frac{n}{2}$ is odd, then $\gamma(\Gamma(L))= \frac{n}{2}$. If $\frac{n}{2}$ is even, then $\gamma(\Gamma(L))\le \frac{n}{2}+1$.
\end{Theorem}
\begin{proof} Let $L'$ be the $\frac{n}{2}\times \frac{n}{2}$ submatrix of $L$ obtained from $L$ by deleting every second row and every second column.
Notice that $L'$ is isotopic to a cyclic Latin square. By \cite{WAN2004}, $L'$ has a transversal if and only if $\frac{n}{2}$ is odd. This implies that if $\frac{n}{2}$ is odd, we can consider $S$ a transversal of $L'$. However such $S$ forms a dominating set of $\Gamma(L)$. By Theorem~\ref{theo:lowbanddominsetnumb}, $\gamma(\Gamma(L))= \frac{n}{2}$.

On the other hand, if $\frac{n}{2}$ is even, we can construct $S'$ a partial transversal of $L'$ of cardinality $(\frac{n}{2})-1$. This implies that adding two entries to $S'$ we can obtain $S$ a dominating set of $\Gamma(L)$ with $|S|=(\frac{n}{2})+1$. Hence $\gamma(\Gamma(L))\le (\frac{n}{2})+1$.
\end{proof}

In the proof of \cite[Theorem 13]{BMSW}, it is shown that if you take a dominating set of size $n-d$, then it must
miss at least $d$ rows, at least $d$ columns and at least $d$ symbols. If you look at the submatrix formed by the rows that are missed and
the columns that are missed, then it cannot contain any of the symbols that are missed. This fact allows us to rewrite \cite[Theorem 13]{BMSW}, and obtain the following result.

\begin{Theorem} Fix $\varepsilon>0$. Almost all latin squares will not have a dominating set of size less than $n-O(n^{\frac{2}{3}+\varepsilon})$.
\end{Theorem}

\paragraph{\textbf{Acknowledgements}} During the preparation of this article the third author was supported by JSPS Grant-in-Aid for Early-Career Scientists (19K14493).


\end{document}